\newtheorem{thm}{Theorem}[section]
\newtheorem{prop}[thm]{Proposition}
\theoremstyle{remark}
\newtheorem{remark}[thm]{Remark}
\theoremstyle{definition}
\newtheorem{defi}[thm]{Definition}
\newcommand{\R}{\mathbb{R}}
\newcommand{\C}{\mathbb{C}}
\newcommand{\Z}{\mathbb{Z}}
\newcommand{\mE}{\mathrm{E}}
\newcommand{\mV}{\mathrm{V}}
\newcommand{\cK}{\mathcal{K}}
\newcommand{\cD}{\mathcal{D}}
\newcommand{\cG}{\mathcal{G}}
\renewcommand\phi{\varphi}
\newcommand{\vv}{\mathrm{v}}
\newcommand{\dom}{\mathrm{dom}}
\renewcommand{\geq}{\geqslant}
\renewcommand{\leq}{\leqslant}
\newcommand{\Sum}{\displaystyle \sum}
\newcommand{\f}[2]{\frac{#1}{#2}}
\renewcommand{\geq}{\geqslant}
\renewcommand{\leq}{\leqslant}
\newcommand\gH{{\mathfrak{H}}}
\newcommand{\gG}{{\Gamma}}
\newcommand\cH{{\mathcal{H}}}
\newcommand\cA{{\mathcal{A}}}
\newcommand\cN{{\mathfrak{N}}}
\newcommand{\be}{\begin{equation}}
\newcommand{\ee}{\end{equation}}
\tikzstyle{nodino}=[circle,draw,fill,inner sep=0pt,minimum size=0.5mm]
\tikzstyle{infinito}=[circle,inner sep=0pt,minimum size=0mm]
\tikzstyle{nodo}=[circle,draw,fill,inner sep=0pt, minimum size=0.5*width("k")]
\tikzstyle{nodo_vuoto}=[circle,draw,inner sep=0pt, minimum size=0.5*width("k")]
\tikzset{every loop/.style={min distance=10mm,in=300,out=240,looseness=10}}
\tikzset{place/.style={circle,thick,draw=blue!75,fill=blue!20,minimum
size=6mm}}
\tikzset{place2/.style={circle,thick,draw=red!75,fill=red!20,minimum
size=6mm}}
\title[The Dirac-Kirchoff operator on metric graphs]{A note on the Dirac operator with Kirchoff-type\\ vertex conditions on metric graphs}
\author[W. Borrelli]{William Borrelli}
\address[W. Borrelli]{Scuola Normale Superiore, Centro De Giorgi, Piazza dei Cavalieri 3, I-56100 , Pisa, Italy.} 
\email{william.borrelli@sns.it}
\author[R. Carlone]{Raffaele Carlone}
\address[R. Carlone]{Universit\`{a} ``Federico II'' di Napoli, Dipartimento di Matematica e Applicazioni ``R. Caccioppoli'', MSA, via Cinthia, I-80126, Napoli, Italy.} 
\email{raffaele.carlone@unina.it}
\author[L. Tentarelli]{Lorenzo Tentarelli}
\address[L. Tentarelli]{Politecnico di Torino, Dipartimento di Scienze Matematiche ``G.L. Lagrange'', Corso Duca degli Abruzzi 24, 10129, Torino, Italy.} 
\email{lorenzo.tentarelli@polito.it}
\begin{document}

%
%

\begin{abstract}
In this note we present some properties of the Dirac operator on noncompact metric graphs with \emph{Kirchoff-type} vertex conditions. In particular, we discuss its spectral features and describe the associated quadratic form. Moreover, we prove that the operator converges to the Schr\"odinger operator with usual Kirchoff conditions, in the non-relativistic limit.
\end{abstract}
\maketitle

\label{sec-intro}

The investigation of evolution equations on metric graphs (see, e.g., Figure \ref{fig-gen}) has become very popular nowadays as they are assumed to represent effective models for the study of the dynamics of physical systems confined in branched spatial domains. A considerable attention has been devoted to the cubic Schr\"odinger equation, as it is supposed to well approximate the behavior of Bose-Einstein condensates in ramified traps (see, e.g., \cite{GW-PRE} and the references therein).

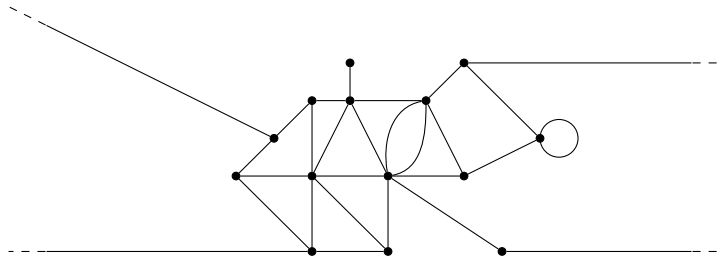
\begin{figure}
\centering
\begin{tikzpicture}[xscale= 0.5,yscale=0.5]
\node at (0,0) [nodo] (1) {};
\node at (2,0) [nodo] (2) {};
\node at (4,1) [nodo] (3) {};
\node at (2,3) [nodo] (4) {};
\node at (1,2) [nodo] (5) {};
\node at (-1,2) [nodo] (6) {};
\node at (-2,0) [nodo] (7) {};
\node at (-1,3) [nodo] (8) {};
\node at (-2,2) [nodo] (9) {};
\node at (-4,0) [nodo] (10) {};
\node at (-2,-2) [nodo] (11) {};
\node at (0,-2) [nodo] (12) {};
\node at (3,-2) [nodo] (13) {};
\node at (-3,1) [nodo] (14) {};
\node at (8,-2) [infinito] (15) {};
\node at (9,-2) [infinito] (15b) {};
\node at (8,3) [infinito] (16) {};
\node at (9,3) [infinito] (16b) {};
\node at (-9,-2) [infinito] (17) {};
\node at (-10,-2) [infinito] (17b) {};
\node at (-9,4) [infinito] (18) {};
\node at (-10,4.5) [infinito] (18b) {};
\draw (4.5,1) circle (0.5cm);
\draw [-] (1) -- (2);
\draw [-] (2) -- (3);
\draw [-] (3) -- (4);
\draw [-] (4) -- (5);
\draw [-] (5) -- (2);
\draw [-] (1) -- (13);
\draw [-] (1) -- (12);
\draw [-] (5) -- (6);
\draw [-] (1) -- (6);
\draw [-] (8) -- (6);
\draw [-] (11) -- (12);
\draw [-] (7) -- (12);
\draw [-] (1) -- (7);
\draw [-] (6) -- (7);
\draw [-] (9) -- (7);
\draw [-] (7) -- (11);
\draw [-] (11) -- (10);
\draw [-] (7) -- (10);
\draw [-] (9) -- (10);
\draw [-] (9) -- (6);
\draw [-] (1) to [out=100,in=190] (5);
\draw [-] (1) to [out=10,in=270] (5);
\draw [-] (13) -- (15);
\draw [dashed] (15) -- (15b);
\draw [-] (4) -- (16);
\draw [dashed] (16) -- (16b);
\draw [-] (14) -- (18);
\draw [dashed] (18) -- (18b);
\draw [-] (11) -- (17);
\draw [dashed] (17) -- (17b);
\end{tikzpicture}
\caption{A general noncompact metric graph.}
\label{fig-gen}
\end{figure}


This, naturally, has lead to the study of the graph versions of the laplacian, given by suitable \emph{vertex conditions} and, especially, to the study of the standing waves of the associated NonLinear Schr\"odinger Equation (NLSE) (see, e.g., \cite{ADST-APDE,AST-CVPDE,AST-JFA,AST-CMP,AST-CVPDE2,CDS-MJM,CFN-Non,D-JDE,DD-p,DT-OTAA,DT-p,ST-JDE,ST-NA,T-JMAA}). In particular, the most investigated sub case has been that of the \emph{Kirchhoff} vertex conditions, which impose at each vertex $\vv$:
\begin{itemize}
 \item[(i)] continuity of the function: $u_e(\vv)=u_f(\vv),\qquad\forall e,f\succ\vv,\qquad\forall \vv\in\cK$,\\[-.3cm]
 \item[(ii)] ``balance'' of the derivatives: $\sum_{e\succ v}\f{du_e}{dx_e}(\vv)=0,\qquad\forall \vv\in\cK$,
\end{itemize}
where $\cK$ denotes the \emph{compact core} of the graph (i.e., the subgraph of the bounded edges), $e\succ\vv$ means that the edge $e$ is incident at the vertex $\vv$ and $\f{du_e}{dx_e}(\vv)$ stands for $u_e'(0)$ or $-u_e'(-\ell_e)$ according to the parametrization of the edge (for more see Section \ref{sec-set}). The above conditions correspond to the \emph{free case}, namely, where there is no interaction at the vertices which are then mere junctions between edges. 

As a further development, in the last years the study of the Dirac operator on metric graphs has also generated a growing interest (see, e.g., \cite{ALTW-IEOT,BH-JPA,BT-JMP,P}). Moreover, recently \cite{SBMK-JPA} proposed (although if in the prototypical case of the \emph{infinite 3-star} graph depicted in Figure \ref{fig-3star}) the study of the NonLinear Dirac Equation (NLDE) on networks, where  the Dirac operator is given by
\begin{equation}
 \label{eq-dirac_formal}
 \cD:=-\imath c\frac{d}{dx}\otimes\sigma_{1}+mc^{2}\otimes\sigma_{3}\,.
\end{equation}
Here $m>0$ and $c>0$ are two parameters representing the \emph{mass} of the generic particle of the system and the \emph{speed of light} (respectively), and $\sigma_1$ and $\sigma_3$ are the so-called \emph{Pauli matrices}, i.e.
\begin{equation}
 \label{eq-pauli}
 \sigma_1:=\begin{pmatrix}
  0 & 1 \\
  1 & 0
 \end{pmatrix}
 \qquad\text{and}\qquad
 \sigma_3:=\begin{pmatrix}
  1 & 0 \\
  0 & -1
 \end{pmatrix}.
\end{equation}
Precisely, as for the majority of the works on the NLSE, \cite{SBMK-JPA} suggests the study of the stationary solutions, that is those $2$-spinors such that $\chi(t,x)=e^{-i\omega t}\,\psi(x)$, with $\omega\in\R$ and with $\psi$ solving
\[
 \cD\psi-|\psi|^{p-2}\,\psi=\omega\, \psi.
\]

\begin{remark}
We observe that, strictly speaking, the parameter $c$ in \eqref{eq-dirac_formal} corresponds to the \emph{speed of light} only in truly relativistic models, whereas in other contexts it should be rather considered as a phenomenological parameter depending on the model under study. Nevertheless, for the sake of simplicity, we will refer to it as ``speed of light'' throughout the paper.
\end{remark}


\begin{figure}
 \centering
 \begin{tikzpicture}[xscale= 0.3,yscale=0.3]
 \node at (0,0) [nodo] (00) {};
 \node at (6,0) [infinito] (60) {};
 \node at (8,0) [infinito] (70) {};
 \node at (-4,4) [infinito] (-44) {};
 \node at (-6,6) [infinito] (-4545) {};
 \node at (-4,-4) [infinito] (-4-4) {};
 \node at (-6,-6) [infinito] (-45-45) {};

 \draw [-] (00) -- (60);
 \draw [dashed] (70) -- (60);
 \draw [-] (00) -- (-44);
 \draw [dashed] (-44) -- (-4545);
 \draw [-] (00) -- (-4-4);
 \draw [dashed] (-4-4) -- (-45-45);
 \end{tikzpicture}
 \caption{The infinite 3-star graph}
 \label{fig-3star}
\end{figure}
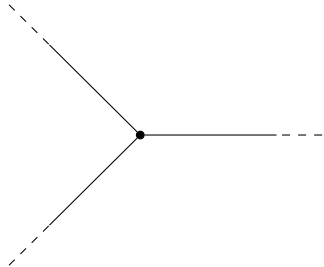

The attention recently attracted by the linear and the nonlinear Dirac equations is due to their physical applications, as effective equations, both in solid state physics and in nonlinear optics (see \cite{HC-PD,HC-NJP}). While initially the NLDE appeared as a field equation for relativistic interacting fermions (see \cite{ELS-BAMS,LKG-PRD}), thereafter it was used in particle physics (to simulate features of quark confinement), in acoustics and in the context of Bose-Einstein Condensates (see \cite{HC-NJP}). Recently, it also appeared that certain properties of some physical models, as thin carbon structures, are effectively described by the NLDE (see \cite{AS-JMP,B-JDE,B-JMP,B-CVPDE,CCNP-SIMA,FW-JAMS,FW-CMP, P-RIMS}).

On the other hand, in the context of metric graphs NLDE may describe the constrained dynamics of genuine relativist particles, or be regarded as an effective model for solid state/nonlinear optics systems (as already remarked). In particular, it applies in the analysis of effective models of condensed matter physics and field theory (see \cite{SBMK-JPA}). Moreover, Dirac solitons in networks may be realized in optics, in atomic physics, etc. (see again \cite{SBMK-JPA} and the references therein).

Concerning the existence of standing waves for the NLDE on metric graphs, to our knowledge, the first rigorous mathematical work on the subject is \cite{BCT-p}, where a nonlinearity localized on the compact core of the graph is considered (refer to \cite{BCT-pp} for a survey on the standing waves of NLDE and NLSE with localized nonlinearity). Recently, in \cite{BCT-non} the authors also studied the Cauchy problem for nonlinear Dirac equations with extended nonlinearities (for general metric graphs) and the existence of standing waves on infinite $N$-star graphs. 

\medskip
It is clear that (as for the NLSE), preliminarily to the study of the nonlinear case, it is necessary to find suitable vertex conditions for the operator $\cD$ that make it self-adjoint. In this paper, we consider those conditions that converge to the Kirchhoff ones in the nonrelativistic limit (for details see \cite[Appendix A]{BCT-non}), and that we call \emph{Kirchhoff-type}, which represents (as well as Kirchhoff for Schr\"odinger) the \emph{free case}. Roughly speaking, these conditions ``split'' the requirements of Kirchhoff conditions: the continuity condition is imposed only on the first component of the spinor, while the second component (in place of the derivative) has to satisfy the ``balance'' condition (see \eqref{eq-kirchtype1}$\&$\eqref{eq-kirchtype2} below).

Precisely, here we discuss self-adjointness of this graph realization of $\cD$, its more relevant spectral properties and some question arising in the rigorous definition of the associated quadratic form. Although this could seem a slightly detached topic with respect to zero-range interactions, it actually is one of its natural evolutions. The attachment of two (or more) edges of a metric graph is usually modeled, indeed, by means of an interplay between a point interaction on each of the two (or more) edges. As a consequence, the techniques to investigate self-adjoint extensions of differential operators on graphs relies on the very same techniques developed for the study self-adjoint extensions of differential operators with point/contact interactions. The Kirchhoff-type case, even representing the free case in the graph context, can be managed simply as a particular case in this framework.

\medskip
We limit ourselves to the case of \emph{noncompact} graphs with a finite number of edges since it is the most studied one in the nonlinear context.

\medskip
The paper is organized as follows. In Section \ref{sec-set} we recall some fundamental notions on metric graphs and we give the definition of the Dirac operator with \emph{Kirchhoff-type} vertex conditions. In Section \ref{sec-linear}, we give a justification of the self adjointness of the operator and of its spectral properties. A model case is presented in Section \ref{sec-model}, in order to clarify the general arguments developed in Section \ref{sec-linear}. Finally, Section \ref{sec-quadraticform} accounts for the particular features of the quadratic form associated with $\cD$ and its form domain. 


\section{Functional setting}
\label{sec-set}

Preliminarily, we recall some basic notions on metric graphs. More details can be found in \cite{AST-CVPDE,BK,K-WRM} and  references therein.

Throughout, by \emph{metric graph} $\cG=(\mV,\mE)$ we mean a connected {\em multigraph} (i.e., possibly with multiple edges and self-loops) with a finite number of edges and vertices. Each edge is a finite or half-infinite segment of real line and the edges are glued together at their endpoints (the vertices of $\cG$) according to the topology of the graph (see, e.g., Figure \ref{fig-gen}). 

Unbounded edges are identified with copies of $\R^+ = [0,+\infty)$ and are called half-lines, while bounded edges are identified with closed and bounded intervals $I_e =[0,\ell_e]$, $\ell_e>0$. Each edge, bounded or unbounded, is endowed with a coordinate $x_e$ which possess an arbitrary orientation when the interval is bounded and the natural orientation in case of a half-line. As a consequence, the graph $\cG$ is a locally compact metric space, the metric given by the shortest distance along the edges. Clearly, since we assume a finite number of edges and vertices, $\cG$ is \emph{compact} if and only if it does not contain any half-line. 

\medskip
Consistently, a function $u:\cG\to\C$ is actually a family of functions $(u_e)$, where $u_e:I_e\to\C$  is the restriction of $u$ to the edge $e$. The usual $L^p$ spaces can be defined in the natural way, with norms
\[
 \|u\|_{L^p(\cG)}^p := \sum_{e\in\mE} \| u_e\|_{L^p(I_e)}^p,\quad \text{if }p\in[1,\infty),
\]
and
\[
 \|u\|_{L^\infty(\cG)} := \max_{e\in\mE} \| u_e\|_{L^\infty(I_e)},
\]
while $H^m(\cG)$ are the spaces of functions $u=(u_e)$ such that $u_e\in  H^m(I_e)$ for every edge $e\in\mE$, with norm
\[
 \|u\|_{H^m(\cG)}^2 = \sum_{i=0}^m\|u^{(i)}\|_{L^2(\cG)}^2.
\]
Accordingly, a spinor $\psi=(\psi^1,\psi^2)^T:\cG\to\C^2$ is a family of 2-spinors
\[
 \psi_e=\begin{pmatrix}\psi_e^1\\[.2cm] \psi_e^2\end{pmatrix}:I_e\longrightarrow\C^{2},\qquad \forall e\in\mE,
\]
and thus
\[
L^{p}(\cG,\C^{2}):= \bigoplus_{e\in\mE} L^{p}(I_e,\C^{2}),
\]
endowed with the norms
\[
 \Vert\psi\Vert_{L^{p}(\cG,\C^{2})}^p:=\Sum_{e\in\mE}\Vert \psi_e\Vert_{L^{p}(I_e,\C^{2})}^p,\quad \text{if }p\in[1,\infty),
\]
and
\[
 \Vert\psi\Vert_{L^{\infty}(\cG,\C^2)}:=\max_{e\in\mE}\Vert\psi_e\Vert_{L^{\infty}(I_e,\C^{2})},
\]
whereas
\[
H^{m}(\cG,\C^{2}):= \bigoplus_{e\in\mE} H^{m}(I_e,\C^{2})
\]
endowed with the norm
\[
 \Vert\psi\Vert_{H^{m}(\cG,\C^{2})}^2:=\Sum_{e\in\mE}\Vert \psi_e\Vert_{H^{m}(I_e,\C^{2})}^2
\]

\begin{remark}
 Usually, graph Sobolev spaces are not defined as before. They also contain some further requirement on the behavior of the functions at the vertices of the graph (in the case $m=1$, for instance, global continuity is often required). However, in a Dirac context it is convenient to keep integrability requirements and conditions at the vertices separated.
\end{remark}
 
Now, we can define the Kirchhoff-type realization of the Dirac operator on graphs.

\begin{defi}
\label{defi-dirac}
Let $\cG$ be a metric graph and let $m,c>0$. We define the \emph{Dirac operator} with Kirchhoff-type vertex conditions the operator $\cD:L^2(\cG,\C^2)\to L^2(\cG,\C^2)$ with action
\begin{equation}
\label{eq-actionD}
\cD_{|_{I_e}}\psi=\cD_e\psi_e:=-\imath c\,\sigma_{1}\psi_e'+mc^{2}\,\sigma_{3}\psi_e,\qquad\forall e\in\mE,
\end{equation}
$\sigma_1,\sigma_3$ being the matrices defined in \eqref{eq-pauli}, and domain
\[
 \dom(\cD):=\left\{\psi\in H^1(\cG,\C^2):\psi\text{ satisfies \eqref{eq-kirchtype1} and \eqref{eq-kirchtype2}}\right\},
\]
where
\begin{gather}
 \label{eq-kirchtype1}
 \psi_e^{1}(\vv)=\psi^{1}_{f}(\vv),\qquad\forall e,f\succ\vv,\qquad\forall \vv\in\cK,\\[.4cm]
 \label{eq-kirchtype2}
 \sum_{e\succ v}\psi^{2}_{e}(\vv)_{\pm}=0,\qquad\forall \vv\in\cK,
\end{gather}
$\psi^{2}_{e}(\vv)_{\pm}$ standing for $\psi^{2}_{e}(0)$ or $-\psi^{2}_{e}(\ell_e)$ according to whether $x_e$ is equal to $0$ or $\ell_e$ at vertex $\mathrm{v}$.
\end{defi}

An immediate, albeit informal, way to see why the previous one can be considered a Kirchhoff-type realization of the Dirac operator is the following. First, recall that the domain of the Kirchhoff laplacian consists of the $H^2(\cG)$-functions that also satisfy conditions (i)$\&$(ii) of Section \ref{sec-intro}. Therefore, since, roughly speaking, the laplacian is (a component of) the square of the Dirac operator (up to a zero order term), one can square $\cD$ and check if the resulting operator is in fact the Kirchhoff laplacian (up to a zero order term). This is, indeed, the case, since $\cD^2$ clearly acts as $(-\Delta+m^2c^4)\otimes\mathbb{I}_{\C^2}$ and since, considering spinors of the type $\psi=(\psi^1,0)^T$, if one imposes that $\psi\in\dom(\cD^2)$ (namely, that $\psi\in\dom(\cD)$ and that $\cD\psi\in\dom(\cD)$), then $\psi^1\in\dom(-\Delta)$. 


\section{Self-adjointness and spectrum}
\label{sec-linear}

In this section, we prove the self-adjointness of the operator $\cD$ and present its main spectral features. 

Preliminarily, we observe that the proof of the self-adjointness is not new (see, e.g. \cite{AP-JPA,BT-JMP,N-RSTA,P-OM}). In particular, \cite{BT-JMP} shows it for a wide class of vertex conditions, including the Kirchoff-type ones. Here we give an alternative justification using the theory of \emph{boundary triplets}.

The study the of $\cD$ requires some introductory notions. Let $A$ be a densely defined closed symmetric operator in a separable Hilbert space $\mathcal{H}$ with equal deficiency
indices $\mathrm{n}_\pm(A):=\dim \cN_{\pm \imath} \leq \infty,$ where
\[
\cN_z:=\ker(A^*-z)\,,\qquad z\in\C\setminus\R\,,
\]
is the defect subspace.

\begin{defi}
A triplet $\Pi=\{\gH,\gG_0,\gG_1\}$ is said a \emph{boundary triplet} for the adjoint operator $A^*$ if and only if $\gH$ is a Hilbert space and $\Gamma_0,\Gamma_1:\  \dom(A^*)\rightarrow \gH$
are linear mappings such that
\[
\langle A^*f|g\rangle - \langle f|A^*g\rangle = \langle\gG_1f,\gG_0g\rangle_\gH-
\langle\gG_0f,\gG_1g\rangle_\gH, \qquad f,g\in\dom(A^*),
\]
holds (with $\langle\cdot,\cdot\rangle_\gH$ the scalar product in $\gH$) and the mapping
\[
 \gG:=\begin{pmatrix}\Gamma_0\\[.2cm]\Gamma_1\end{pmatrix}:  \dom(A^*)
\rightarrow \gH \oplus \gH
\]
is surjective.
\end{defi}

\begin{defi}
 Let $\Pi=\{\gH,\gG_0,\gG_1\}$ be a boundary triplet for the adjoint operator $A^*$, define the operator
 \[
  A_{0}:=A^{*}|_{\operatorname{ker}\Gamma_{0}}
 \]
and denote by $\rho(A_{0})$ its resolvent set. Then, the {\em $\gamma$-field} and {\em Weyl function} associated with $\Pi$ are the operator valued functions $\gamma(\cdot) :\rho(A_0)\rightarrow  \mathcal{L}(\gH,\cH)$ and
$M(\cdot):\rho(A_0)\rightarrow  \mathcal{L}(\gH)$, respectively, defined by
\begin{equation}
\label{gammafield}
\gamma(z):=\left(\Gamma_0|_{\cN_z}\right)^{-1}
\qquad\text{and}\qquad M(z):=\Gamma_1\circ\gamma(z), \qquad
z\in\rho(A_0).
\end{equation}
\end{defi}


\subsection{Self-adjointness}

In order to apply the theory of boundary triplets to the definition of the Kirchhoff-type Dirac operator, one has to study the operator on the single components of the graph (segments and half lines) imposing suitable boundary conditions. Then, one describes the effect of connecting these one-dimensional components, according to the topology of the graph, through the vertex conditions \eqref{eq-kirchtype1}-\eqref{eq-kirchtype2}.

\medskip
First, observe that the set $\mE$ of the edges of a metric graph $\cG$ can be decomposed in two subsets, namely, the set of the bounded edges $\mathrm{E}_{s}$ and set of the half-lines $\mathrm{E}_{h}$. Fix, then, $e\in \mathrm{E}_{s}$ and consider the corresponding minimal operator  $\widetilde{D_{e}}$ on $\mathcal{H}_{e}=L^{2}(I_{e},\C^{2})$, which has the same action of \eqref{eq-actionD} but domain $H^{1}_{0}(I_{e},\C^{2})$. As a consequence, the adjoint operator possesses the same action and domain
\[
 \dom(\widetilde{D^{*}_{e}})=H^{1}(I_{e},\C^{2}).
\]
On the other hand, a suitable choice of trace operators (introduced in \cite{GT-p}) is given by $\Gamma^{e}_{0,1}:H^{1}(I_{e},\C^{2})\rightarrow\C^{2}$, with
\[
\Gamma^{e}_{0}\begin{pmatrix}\psi^{1}_e \\[.2cm] \psi^{2}_e  \end{pmatrix}=\begin{pmatrix} \psi^{1}_e(0) \\[.2cm] \imath c\psi^{2}_e(\ell_{e})\end{pmatrix},\qquad \Gamma^{e}_{1}\begin{pmatrix}\psi^{1}_e \\[.2cm] \psi^{2}_e  \end{pmatrix}=\begin{pmatrix} \imath c\psi^{2}_e(0) \\[.2cm] \psi^{1}_e(\ell_{e})\end{pmatrix}
\]
and hence, given the boundary triplet $\left\{\gH_{e},\Gamma^{e}_{0},\Gamma^{e}_{1} \right\}$, with $\gH_{e}=\C^{2}$, one can compute the gamma field and the Weyl function using \eqref{gammafield} and prove that $\widetilde{D^{*}_{e}}$ has defect indices $n_{\pm}(\widetilde{\cD_{e}})=2$. Moreover, note that in this way we can define an operator $\cD_e$ with the same action of $\widetilde{D^{*}_{e}}$ (and $\widetilde{D_{e}}$) and domain
\[
\dom(\cD_{e})=\operatorname{ker}\Gamma^{e}_{0},
\]
which is self-adjoint by construction.

Analogously, fix now $e'\in \mathrm{E}_{h}$ and consider the minimal operator $\widetilde{\cD_{e'}}$ defined on $\cH_{e'}=L^{2}(\R_{+},\C^{2})$, with the same action as before and domain $H^{1}_{0}(\R_{+},\C^{2})$. The domain of the adjoint reads
\[
 \dom(\widetilde{\cD^{*}_{e'}})=H^{1}(\R_{+},\C^{2})
\]
and the trace operators $\Gamma^{e'}_{0,1}:H^{1}(\R_{+},\C^{2})\rightarrow\C$ can be properly given by
\[
\Gamma^{e'}_{0}\begin{pmatrix}\psi^{1}_{e'} \\[.2cm] \psi^{2}_{e'}  \end{pmatrix}= \imath c \psi^{1}_{e'}(0),\qquad \Gamma^{e'}_{1}\begin{pmatrix}\psi^{1}_{e'} \\[.2cm] \psi^{2}_{e'}  \end{pmatrix}= \psi^{2}_{e'}(0).
\]
Again, the gamma field and the Weyl function are provided by \eqref{gammafield} (with respect to the boundary triplet $\{ \gH_{e'},\Gamma^{e'}_{0},\Gamma^{e'}_{1}\}$, with $\gH_{e'}=\C$), while the defect indices are $n_{\pm}(\widetilde{\cD_{e'}})=1$. In addition, one can define again a self-adjoint (by construction) operator as
\[
\cD_{e'}:=\widetilde{\cD^{*}_{e'}},\qquad\dom(\cD_{e'}):=\operatorname{ker}\Gamma^{e}_{0}.
\]

Then, we can describe the Dirac operator introduced in Definition \ref{defi-dirac} using Boundary Triplets. Let $\cD_{0}$ be the operator
\[
\cD_{0}:=\bigoplus_{e\in \mathrm{E}_{s}}\cD_{e}\oplus\bigoplus_{e'\in \mathrm{E}_{h}}\cD_{e'},
\]
defined on $\cH=\bigoplus_{e\in\mathrm{E}_{s}}\cH_{e}\oplus\bigoplus_{e'\in\mathrm{E}_{h}}\cH_{e'}$,
with domain given by the direct sum of the domains of the summands. Consider, also, the operator
\[
\widetilde{\cD}:=\bigoplus_{e\in \mathrm{E}_{s}}\widetilde{\cD_{e}}\oplus\bigoplus_{e'\in \mathrm{E}_{h}}\widetilde{\cD_{e'}},
\]
and its adjoint
\[
\widetilde{\cD^{*}}:=\bigoplus_{e\in \mathrm{E}_{s}}\widetilde{\cD^{*}_{e}}\oplus\bigoplus_{e'\in \mathrm{E}_{h}}\widetilde{\cD^{*}_{e'}}
\]
(with the natural definitions of the domains). Introduce, also, the trace operators
\[
 \Gamma_{0,1}=\bigoplus_{e\in \mathrm{E}_{s}}\Gamma^{e}_{0,1}\oplus\bigoplus_{e'\in \mathrm{E}_{h}}\Gamma^{e'}_{0,1}.
\] 

One can check that $\left\{\gH,\Gamma_{0},\Gamma_{1} \right\}$, with $\gH=\C^{M}$ and $M=2\vert\mathrm{E}_{s}\vert+\vert\mathrm{E}_{h}\vert$,  is a boundary triplet for the operator $\widetilde{\cD^{*}}$ (and hence one can compute gamma-field and Weyl function as before). On the other hand, note that boundary conditions \eqref{eq-kirchtype1}-\eqref{eq-kirchtype2} are ``local'', in the sense that at each vertex they are expressed independently of the conditions on other vertices. As a consequence, they can be written using proper block diagonal matrices $A,B\in \C^{M\times M}$, with $AB^{*}=BA^{*}$, as
\[
A\Gamma_{0}\psi=B\Gamma_{1}\psi
\]
(see also the model case in Section \ref{sec-model}). The sign convention of \eqref{eq-kirchtype2} is incorporated in the definition of the matrix $B$. Therefore the Dirac operator with Kirchoff-type conditions can be defined as 
\[
\cD:=\widetilde{\cD^{*}},\qquad \dom(\cD):=\operatorname{ker}(A\Gamma_{0}-B\Gamma_{1}),
\]
and then, by construction,

\begin{thm}
 The Dirac operator with Kirchhoff-type boundary conditions $\cD$, defined by Definition \ref{defi-dirac} is self-adjoint.
\end{thm}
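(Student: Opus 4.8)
The plan is to reduce the statement to the abstract parametrization of self-adjoint extensions furnished by boundary triplet theory. Recall the standard criterion (Derkach--Malamud, Gorbachuk--Gorbachuk): if $\{\gH,\Gamma_0,\Gamma_1\}$ is a boundary triplet for $\widetilde{\cD^*}$ and $A,B\in\C^{M\times M}$, then the restriction $\widetilde{\cD^*}|_{\ker(A\Gamma_0-B\Gamma_1)}$ is self-adjoint \emph{if and only if} $AB^*=BA^*$ and the $M\times 2M$ matrix $(A\mid B)$ has full rank $M$ (maximality). Since $\cD$ is defined to be exactly this restriction, it suffices to verify these two hypotheses.

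First I would confirm that $\{\gH,\Gamma_0,\Gamma_1\}$ is indeed a boundary triplet for $\widetilde{\cD^*}$ (this is asserted in the construction, and I would make it explicit). Because $\widetilde{\cD^*}$, $\Gamma_0$ and $\Gamma_1$ all split as direct sums over the edges, both the abstract Green identity and the surjectivity of $\Gamma=(\Gamma_0,\Gamma_1)^T$ reduce to edge-by-edge checks. On a bounded edge $I_e$ the term $mc^2\sigma_3$ is Hermitian and contributes nothing, while integrating by parts turns the action of $-\imath c\,\sigma_1\tfrac{d}{dx}$ into the boundary pairing $-\imath c\,[\sigma_1\psi_e\cdot\overline{\phi_e}]_0^{\ell_e}$; a direct computation shows this equals $\langle\Gamma_1^e\psi,\Gamma_0^e\phi\rangle_{\C^2}-\langle\Gamma_0^e\psi,\Gamma_1^e\phi\rangle_{\C^2}$, the factors $\imath c$ in the definitions of $\Gamma_0^e,\Gamma_1^e$ being calibrated precisely so that this identity holds. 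Surjectivity follows because the four numbers $\psi_e^1(0),\psi_e^2(0),\psi_e^1(\ell_e),\psi_e^2(\ell_e)$ can be prescribed independently by $H^1$ spinors; the half-line case is identical, with a single endpoint.

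Next I would examine the matrices $A,B$ encoding \eqref{eq-kirchtype1}--\eqref{eq-kirchtype2}. Since these conditions are local, after grouping the $M$ coordinates by vertex both $A$ and $B$ become block diagonal, with one block per vertex of the compact core, so it suffices to argue block-by-block. At a vertex $\vv$ of degree $\gd$, continuity of the first components gives $\gd-1$ independent relations and the signed balance of the second components gives one more, for a total of $\gd$ conditions on the $2\gd$ boundary data attached to $\vv$ --- exactly half, which is what maximality demands. I would then verify that $AB^*=BA^*$ (already recorded in the construction; the $\imath c$ factors are what produce the Hermitian cancellation) and that $(A\mid B)$ has rank $M$, i.e.\ that no vertex condition is redundant. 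Feeding these two facts into the abstract criterion yields the self-adjointness of $\cD$.

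The main obstacle is the bookkeeping in this last step: correctly translating the vertex conditions --- with the sign convention $\psi_e^2(\vv)_\pm$ and the asymmetric roles of the two spinor components --- into explicit blocks of $A$ and $B$, and checking \emph{simultaneously} the Hermiticity $AB^*=BA^*$ and the full-rank condition, especially at vertices where half-lines and bounded edges meet so that the block sizes differ. By contrast, the Green identity is only a routine, if slightly tedious, integration by parts.
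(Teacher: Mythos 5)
Your proposal is correct and follows essentially the same route as the paper: edge-by-edge boundary triplets for $\widetilde{\cD^{*}}$ assembled as a direct sum, encoding of the Kirchhoff-type conditions \eqref{eq-kirchtype1}--\eqref{eq-kirchtype2} by block-diagonal matrices $A,B$ with $AB^{*}=BA^{*}$, and the abstract criterion for self-adjointness of the restriction to $\ker(A\Gamma_0-B\Gamma_1)$. If anything, you are more explicit than the paper, which declares the self-adjointness to hold ``by construction'' without spelling out the Green identity verification or the maximal-rank condition on $(A\mid B)$ that your write-up correctly identifies as the remaining hypotheses to check.
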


\begin{remark}
The boundary triplets method provides an alternative way to prove the self-adjointness of $\cD$. More classical approaches \emph{\`{a} la} Von Neumann can be found in \cite{BT-JMP}.
\end{remark}


\subsection{Essential spectrum}

Now, we can focus on the essential spectrum of $\cD$. It can be studied adapting the strategy used for the Schr\"{o}dinger case in \cite{KS-JPA}.

Preliminarily, note that the spectrum of $\cD_{0}$ is given by the union of the spectra of each summand, that is
\[
\sigma(\cD_{0})=\bigcup_{e\in \mathrm{E}_{s}}\sigma(\cD_{e})\cup\bigcup_{e'\in \mathrm{E}_{h}}\sigma(\cD_{e'}).
\]
Precisely, following \cite{CMP-JDE}, each segment $I_{e}$, $e\in \mathrm{E}_{s}$, contributes to the point spectrum of $\cD_{0}$ with eigenvalues given by
\[
\sigma(\cD_{e})=\sigma_{p}(\cD_{e})=\left\{\pm\sqrt{\frac{
2mc^2\pi^2}{ \ell_{e}^2}\,\left(j+\frac12\right)^{2}+m^{2}{c^{4}}} \ , \ j\in\mathbb{N}\right\}, \qquad\forall e\in \mathrm{E}_{s},
\]
and each half-lines has a purely absolutely continuous spectrum
\[
\sigma(\cD_{e})=\sigma_{ac}(\cD_{e})=(-\infty,-mc^{2}]\cup[mc^{2},+\infty),\qquad \forall e\in \mathrm{E}_{h}.
\]

Now, one can check that a Krein-type formula for the resolvent operators holds, namely
\begin{equation}\label{krein}
(\cD-z)^{-1}-(\cD_{0}-z)^{-1}=\gamma(z)\left(B\,M(z)-A\right)^{-1}B\gamma^{*}(\overline{z}), \qquad \forall z\in\rho(\cD)\cap\rho(\cD_{0})
\end{equation}
(with $\gamma(\cdot)$ and $M(\cdot)$ the gamma-field and the Weyl function, respectively, associated with $\cD$ -- see \cite{CMP-JDE}). Hence, the resolvent of the operator $\cD$ is as a perturbation of the resolvent of the operator $\cD_{0}$. Since one can prove that the operator at the right-hand side of \eqref{krein} is of finite rank,
Weyl's Theorem \cite[Thm XIII.14]{RS-IV} gives

\begin{thm}
\label{saspectrum}
The essential spectrum of the operator $\cD$ introduced by Definition \ref{defi-dirac} is given by
\[
\sigma_{ess}(\cD)=\sigma_{ess}(\cD_0)=(-\infty,-mc^2]\cup[mc^2,+\infty).
\]
\end{thm}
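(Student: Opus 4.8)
The plan is to deduce the statement from the Krein-type resolvent formula \eqref{krein} together with Weyl's theorem, so that the proof splits into two essentially independent tasks: first, the direct computation of $\sigma_{ess}(\cD_0)$; second, the verification that the right-hand side of \eqref{krein} is compact, which yields $\sigma_{ess}(\cD)=\sigma_{ess}(\cD_0)$.

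For the first task, I would use that the essential spectrum of a finite orthogonal sum is the union of the essential spectra of the summands, i.e. $\sigma_{ess}(\cD_0)=\bigcup_{e\in\mathrm{E}_s}\sigma_{ess}(\cD_e)\cup\bigcup_{e'\in\mathrm{E}_h}\sigma_{ess}(\cD_{e'})$. Each bounded edge contributes only the discrete set of eigenvalues displayed above; since these are isolated and of finite multiplicity, $\sigma_{ess}(\cD_e)=\emptyset$ for every $e\in\mathrm{E}_s$ (their accumulation only at $\pm\infty$ creates no essential spectrum). On the other hand, for each half-line the spectrum is purely absolutely continuous and equal to $(-\infty,-mc^{2}]\cup[mc^{2},+\infty)$, so that $\sigma_{ess}(\cD_{e'})=(-\infty,-mc^{2}]\cup[mc^{2},+\infty)$. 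Since $\cG$ is noncompact it possesses at least one half-line, and as $\mathrm{E}_h$ is finite the union of these identical bands yields exactly $\sigma_{ess}(\cD_0)=(-\infty,-mc^{2}]\cup[mc^{2},+\infty)$.

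For the second task, the crucial structural remark is that in \eqref{krein} the operator factors through the finite-dimensional boundary space $\gH=\C^{M}$, with $M=2|\mathrm{E}_s|+|\mathrm{E}_h|$. Indeed $\gamma(z)\in\mathcal{L}(\gH,\cH)$ has range of dimension at most $M$, while $\bigl(B\,M(z)-A\bigr)^{-1}B\in\mathcal{L}(\gH)$ is a finite matrix, so the whole composition $\cH\to\gH\to\gH\to\cH$ has rank at most $M$. Hence the resolvent difference is of finite rank, in particular compact, for every $z$ in the nonempty common resolvent set $\rho(\cD)\cap\rho(\cD_0)$ (which contains $\C\setminus\R$, since both operators are self-adjoint). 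Weyl's theorem in the resolvent form \cite[Thm.\ XIII.14]{RS-IV} then gives $\sigma_{ess}(\cD)=\sigma_{ess}(\cD_0)$, and combining this with the first task concludes the proof.

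I would expect the only genuinely delicate point to be the bookkeeping in the first task, namely checking that the eigenvalues produced by the bounded edges are really isolated and of finite multiplicity, so that they fall outside $\sigma_{ess}(\cD_0)$, and noting that, although the coupling encoded by \eqref{krein} may shift these eigenvalues or embed them into the bands, this has no effect on the essential spectrum. The compactness of the perturbation, by contrast, is immediate from the finiteness of the number of edges, which forces $\gH$ to be finite-dimensional.
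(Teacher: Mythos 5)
Your proposal is correct and follows essentially the same route as the paper: both compute $\sigma_{ess}(\cD_0)$ from the component spectra (empty essential spectrum from the bounded edges, the bands $(-\infty,-mc^2]\cup[mc^2,+\infty)$ from the half-lines) and then transfer the result to $\cD$ via the Krein-type resolvent formula \eqref{krein} and Weyl's theorem \cite[Thm XIII.14]{RS-IV}. If anything, your argument is slightly more complete than the paper's, since you explicitly justify the finite-rank claim for the resolvent difference (rank at most $M=2\vert\mathrm{E}_{s}\vert+\vert\mathrm{E}_{h}\vert$, because it factors through the boundary space $\gH=\C^{M}$), a point the paper merely asserts.
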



\subsection{Absence of eigenvalues in the spectral gap}

A natural question raised by Theorem \ref{saspectrum} concerns the existence of eigenvalues and their location.

Unfortunately, there is no easy and general answer. In principle, eigenvalues can be located {}{in different parts of} the spectrum, {}{depending to} the topology and the metric of the graph. Indeed, they can be embedded in the essential spectrum or at the thresholds {}{(see next Sections for examples)}. However, they cannot be in the spectral gap, as shown by the following computation.

Let $\lambda\in\sigma(\cD)$ be an eigenvalue. As a consequence, there exists $0\neq\psi\in \dom(\cD)$ such that
\[
\cD\psi=\lambda\psi,
\]
or equivalently, such that
\begin{gather}
\label{eigenequation1}-\imath c\frac{d\psi^{2}}{dx}=(\lambda-mc^{2})\psi^{1}, \\[.2cm]
\label{eigenequation2}-\imath c\frac{d\psi^{1}}{dx}=(\lambda+mc^{2})\psi^{2}.
\end{gather}
If $\vert\lambda\vert\neq m$, then we can divide both sides of \eqref{eigenequation2} by $(\lambda+mc^{2})$ and plug the value of $\psi^{2}$ into \eqref{eigenequation1}, obtaining
\be
\label{eigenlaplace}
-c^{2}\frac{d^{2}\psi^{1}}{dx^{2}}=(\lambda^{2}-m^{2}c^{4})\psi^{1}.
\ee
Furthermore, using \eqref{eq-kirchtype1}-\eqref{eq-kirchtype2}, we can prove that
\[
\begin{split}
&\sum_{e\succ v}\frac{d\psi^{1}_{e}}{dx}(\vv)=0, \\[.2cm]
&\psi^{1}_{e_{i}}(\vv)=\psi^{1}_{e_{j}}(\vv),\quad\forall e_{i},e_{j}\succ \vv,
\end{split}
\]
so that $\psi^{1}$ is eigenfunction of the Kirchhoff laplacian on $\cG$. Hence, multiplying \eqref{eigenlaplace} times $\psi^{1}$ and integrating,
\[
\vert\lambda\vert > mc^{2},
\]
namely

\begin{prop}
If $\lambda\in\mathbb{R}$ is an eigenvalue of the operator $\cD$ (defined by Definition \ref{defi-dirac}) then $\vert\lambda\vert\geq mc^2$.
\end{prop}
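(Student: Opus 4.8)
The plan is to show that an eigenvalue $\lambda$ of $\cD$ in the spectral gap $(-mc^2,mc^2)$ cannot exist, by reducing the eigenvalue equation to a statement about the Kirchhoff Laplacian and then applying a standard positivity argument. First I would observe that the eigenvalue system \eqref{eigenequation1}--\eqref{eigenequation2} can, away from the thresholds $\lambda=\pm mc^2$, be decoupled: dividing \eqref{eigenequation2} by $(\lambda+mc^2)$ expresses $\psi^2$ in terms of $(\psi^1)'$, and substituting into \eqref{eigenequation1} yields the second-order equation \eqref{eigenlaplace} for the first component $\psi^1$. One should note that this manipulation is legitimate precisely when $|\lambda|\neq mc^2$, which is the case we are examining.

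Next I would verify that the boundary data inherited from $\dom(\cD)$ turn $\psi^1$ into a genuine eigenfunction of the \emph{Kirchhoff} Laplacian. The continuity condition \eqref{eq-kirchtype1} already gives $\psi^1_{e_i}(\vv)=\psi^1_{e_j}(\vv)$ at every vertex, so (i) of Section \ref{sec-intro} holds directly. For the derivative balance (ii), I would use the relation $\psi^2 \propto (\psi^1)'$ coming from \eqref{eigenequation2} together with the second vertex condition \eqref{eq-kirchtype2}, $\sum_{e\succ v}\psi^2_e(\vv)_\pm=0$: since each $\psi^2_e(\vv)_\pm$ is a fixed nonzero multiple of $\frac{d\psi^1_e}{dx}(\vv)$ with the sign convention matching that of $\psi^2_e(\vv)_\pm$, the balance on $\psi^2$ transfers verbatim to the balance $\sum_{e\succ v}\frac{d\psi^1_e}{dx}(\vv)=0$ on the derivatives of $\psi^1$. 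This is the step where the specific normalization of the traces must be checked carefully.

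Finally I would close the argument with an integration by parts. Multiplying \eqref{eigenlaplace} by $\overline{\psi^1}$ and integrating over $\cG$, the Kirchhoff vertex conditions make all boundary terms vanish, so that
\[
c^2\|(\psi^1)'\|_{L^2(\cG)}^2=(\lambda^2-m^2c^4)\,\|\psi^1\|_{L^2(\cG)}^2.
\]
Since the left-hand side is nonnegative, either $\lambda^2\geq m^2c^4$, i.e.\ $|\lambda|\geq mc^2$, or else $\psi^1$ is constant on each edge. In the latter case $(\psi^1)'\equiv 0$ forces $\psi^2\equiv 0$ through \eqref{eigenequation2}, and then \eqref{eigenequation1} gives $(\lambda-mc^2)\psi^1=0$; as $\lambda<mc^2$ in the gap this would yield $\psi^1\equiv 0$ and hence $\psi\equiv 0$, contradicting $\psi\neq 0$. (On a noncompact graph a nonzero constant is not $L^2$ on a half-line anyway, reinforcing the conclusion.) Thus no eigenvalue lies in the open gap, giving $|\lambda|\geq mc^2$.

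The main obstacle I anticipate is the bookkeeping in the second step: one must confirm that the sign convention hidden in the definition of $\psi^2_e(\vv)_\pm$ and the orientation-dependent meaning of $\frac{d\psi^1_e}{dx}(\vv)$ are mutually consistent, so that condition \eqref{eq-kirchtype2} really does translate into the Kirchhoff derivative condition on $\psi^1$ with no spurious sign mismatches. Once that correspondence is secured, the remaining analysis is the routine integration-by-parts estimate.
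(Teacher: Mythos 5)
Your proposal is correct and follows essentially the same route as the paper's own proof: away from $\pm mc^2$ one eliminates $\psi^2$ via \eqref{eigenequation2}, checks that the Kirchhoff-type vertex conditions \eqref{eq-kirchtype1}--\eqref{eq-kirchtype2} turn $\psi^1$ into an eigenfunction of the Kirchhoff Laplacian (the sign conventions in $\psi^2_e(\vv)_\pm$ and $\tfrac{d\psi^1_e}{dx_e}(\vv)$ do match, since $\psi^2_e$ is the \emph{same} nonzero multiple of $(\psi^1_e)'$ on every edge), and concludes by multiplying \eqref{eigenlaplace} by $\overline{\psi^1}$ and integrating. If anything, your treatment of the degenerate case $(\psi^1)'\equiv 0$ (forcing $\psi\equiv 0$ via \eqref{eigenequation1}--\eqref{eigenequation2}) is more careful than the paper's one-line conclusion.
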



\subsection{Graphs with eigenvalues at the thresholds}\label{sec-thresholds}

As already remarked, $\cD$ may present eigenvalues at thresholds. This is the content of the following

\begin{prop}
\label{prop-threshold}
 Let $\cG$ be a graph with two \emph{terminal edges} incident at the same vertex $\rm{v}\in\cK$. Then $\lambda=\pm mc^2$ are eigenvalue of the operator $\cD$ (defined by Definition \ref{defi-dirac}).
\end{prop}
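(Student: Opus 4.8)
The plan is to exhibit, for each sign, an explicit nonzero $\psi\in\dom(\cD)$ with $\cD\psi=\pm mc^2\psi$, obtained by localizing $\psi$ on the two terminal edges $e_1,e_2$ and extending it by zero on the remaining edges. Since $e_1,e_2$ are bounded, any such $\psi$ automatically belongs to $L^2(\cG,\C^2)$ and is edgewise $H^1$; thus the only thing left to verify is that $\psi$ meets the vertex conditions \eqref{eq-kirchtype1}--\eqref{eq-kirchtype2}, so that indeed $\psi\in\dom(\cD)$.

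The first step is to specialize the eigenvalue system \eqref{eigenequation1}--\eqref{eigenequation2} to the thresholds. For $\lambda=mc^2$ it collapses to $\frac{d\psi^2}{dx}=0$ and $\frac{d\psi^1}{dx}=2imc\,\psi^2$, so on each edge $\psi^2$ is a constant and $\psi^1$ is an affine function whose slope is that constant (up to the factor $2imc$); for $\lambda=-mc^2$ the two components exchange roles, $\psi^1$ being constant and $\psi^2$ affine. This reduces the whole construction to the choice of finitely many constants on $e_1$ and $e_2$.

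The second step is to pin down these constants through \eqref{eq-kirchtype1}--\eqref{eq-kirchtype2}. At the common vertex $\vv$ one must enforce simultaneously the continuity of $\psi^1$ and the vanishing of the signed sum of the second components; the role of having \emph{two} edges meeting at $\vv$ is precisely to furnish the extra degree of freedom that lets the two second-component contributions cancel in \eqref{eq-kirchtype2} while the first components are matched in \eqref{eq-kirchtype1}. Together with the conditions at the remaining endpoints of $e_1,e_2$, this yields a homogeneous linear system for the constants which, by the above counting, admits a nontrivial solution; a direct substitution then confirms $\cD\psi=\pm mc^2\psi$.

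The delicate point is the sign bookkeeping in \eqref{eq-kirchtype2}: each term $\psi^2_e(\vv)_\pm$ carries a sign dictated by the orientation of $e_1,e_2$ at $\vv$ and at their other endpoints, and one has to check that the cancellation of the second components and the matching of the first components at $\vv$ are compatible with the endpoint conditions while keeping $\psi\not\equiv0$. Since \eqref{eq-kirchtype1} and \eqref{eq-kirchtype2} constrain \emph{different} components, the explicit eigenfunctions for $+mc^2$ and $-mc^2$ will not coincide, so I would run the two thresholds as two parallel computations, in each case writing down $\psi$ on $e_1\cup e_2$ and verifying the conditions by inspection.
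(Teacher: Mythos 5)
Your overall strategy coincides with the paper's: support the candidate eigenfunction on the two terminal edges and extend by zero, observe that at $\lambda=\pm mc^2$ the system \eqref{eigenequation1}--\eqref{eigenequation2} degenerates so that on each edge one component is constant and the other affine, and then fix the constants via the vertex conditions. Up to that point the plan is fine (and your remark that $L^2$ and edgewise $H^1$ regularity are automatic is correct). The gap is in the last step, which you assert rather than execute: ``together with the conditions at the remaining endpoints of $e_1,e_2$, this yields a homogeneous linear system \dots which, by the above counting, admits a nontrivial solution.'' No counting you have set up supports this. There are four unknowns (two constants per terminal edge). Since $\cG$ is noncompact and connected, the common vertex $\vv$ carries at least one further edge, on which $\psi\equiv 0$; hence \eqref{eq-kirchtype1} at $\vv$ forces $\psi^1_{e_1}(\vv)=\psi^1_{e_2}(\vv)=0$ (two equations) and \eqref{eq-kirchtype2} at $\vv$ gives one more. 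At a vertex of degree one, \eqref{eq-kirchtype1} is vacuous and \eqref{eq-kirchtype2} reduces to $\psi^2_e(\vv)_\pm=0$; so the endpoint conditions you propose to impose at the two free ends add two further equations, giving five homogeneous equations in four unknowns. And in fact the system then collapses to the trivial solution at \emph{both} thresholds: for $\lambda=mc^2$ the free-end conditions give $\psi^2_{e_j}\equiv A_j=0$, whence $\psi^1_{e_j}\equiv B_j$ and $B_j=0$ by continuity at $\vv$; for $\lambda=-mc^2$ continuity at $\vv$ forces the constants $\psi^1_{e_j}\equiv E_j=0$, and then the free-end conditions kill the remaining constants $F_j$. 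So ``a nontrivial solution exists'' is precisely the step that fails as you have set it up.

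The paper's proof avoids this by imposing \eqref{eq-kirchtype1}--\eqref{eq-kirchtype2} \emph{only at the common vertex} $\vv\simeq 0$, leaving the free endpoints of the terminal edges unconstrained: three equations in four unknowns, with the explicit one-parameter family (for $\lambda=mc^2$) $\psi^1_{f_j}(x)=2imcA_jx$, $\psi^2_{f_j}\equiv A_j$, with the two constants opposite to each other so that the balance condition at $\vv$ holds, and the analogous family with the roles of the components exchanged for $\lambda=-mc^2$. Note that this spinor has $\psi^2\neq 0$ at the two pendant vertices, so it would not survive imposing \eqref{eq-kirchtype2} there; the paper is silent on the degree-one vertices (arguably a rough point of the paper itself, since those vertices belong to $\cK$), whereas your plan, by explicitly including the endpoint conditions in the linear system, runs head-on into the obstruction. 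To obtain a correct write-up along the intended lines you must (i) actually solve the system rather than appeal to a degree-of-freedom heuristic, and (ii) take a definite stand on the pendant vertices, imposing the conditions only at $\vv$ as the paper does.
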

 
\begin{remark}
 For simplicity we prove the result for the case depicted in Figure \ref{fig-grafoterminal}, which is the simplest one having the property stated above. The same proof applies to more general graphs provided that they present at least two terminal edges, simply considering spinors which vanish identically everywhere except on the two terminal edges.
 \end{remark}
 
 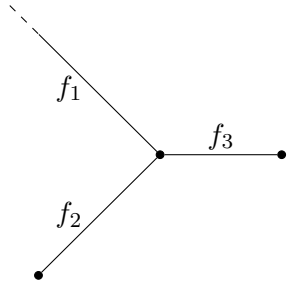
\begin{figure}[h]
 \centering
 \begin{tikzpicture}[xscale= 0.4,yscale=0.4]
 \node at (0,0) [nodo] (00) {};
 \node at (4,0) [nodo] (4) {};
 \node at (-4,4) [infinito] (-44) {};
 \node at (-5,5) [infinito] (-4545) {};
 \node at (-4,-4) [nodo] (-4-4) {};
 \node at (-3,2.2) [infinito] () {$f_1$};
 \node at (-3,-2) [infinito] () {$f_2$};
 \node at (2,.6) [infinito] () {$f_3$};

 \draw [-] (00) -- (4);
 \draw [-] (00) -- (-44);
 \draw [dashed] (-44) -- (-4545);
 \draw [-] (00) -- (-4-4);
 \end{tikzpicture}
 \caption{A 3-star graph with two segments.}
 \label{fig-grafoterminal}
\end{figure}
 
 \begin{proof}[Proof of Proposition \ref{prop-threshold}]
 First, identify the bounded edges of the graph in Figure \ref{fig-grafoterminal} with the compact intervals $I_j=[0,\ell_j], j=2,3$, and the common vertex with $0$.
 
Let $\lambda=mc^2$. Then, equations \eqref{eigenequation1} and \eqref{eigenequation2} read
\begin{gather*}
\frac{d\psi^{2}}{dx}=0\,, \\[.2cm]
\frac{d\psi^{1}}{dx}=2\imath mc\psi^{2}.
\end{gather*}
Now, let $\psi_{f_1}\equiv 0$. Integrating the above equations on $f_2, f_3$ yields
\[
\psi^1_{f_j} (x)= 2\imath mc A_j x+B_j\,,\qquad\mbox{for $x\in [0,\ell_j]$, with $j=2,3$}
\]
and
\[ 
\psi^2_{f_j}(x)\equiv A_{j}\,,\qquad\mbox{for $x\in [0,\ell_j]$, with $j=2,3$}\,
\]
where $A_j ,B_j \in \C$. Therefore, as \eqref{eq-kirchtype1} and \eqref{eq-kirchtype2} have to be satisfied at $\rm{v}\simeq 0$, we find
\[
B_{2}=B_{3}=0,\qquad\mbox{and}\qquad A_{3}=-A_2
\]
and thus $\lambda=mc^2$ is an eigenvalue  of $\cD$.

Let us, now, turn to $\lambda=-mc^2$. In this case the eigenvalue equation becomes
\begin{gather*}
\frac{d\psi^{2}}{dx}=-2\imath mc\psi^{1}, \\[.2cm]
\frac{d\psi^{1}}{dx}=0\,.
\end{gather*}
Setting again $\psi_{f_1}\equiv 0$, we have
\[
\psi^1_{f_j}(x)\equiv E_{j}\,,\qquad\mbox{for $x\in [0,\ell_j]$, with $j=2,3$}\,
\]
and
\[
\psi^2_{f_j} (x)= -2\imath mc E_j x+F_j\,,\qquad\mbox{for $x\in [0,\ell_j]$, with $j=2,3$}
\]
where $E_j, F_j \in \C$, and again by \eqref{eq-kirchtype1} and \eqref{eq-kirchtype2}
\[
E_{2}=E_{3}=0,\qquad\mbox{and}\qquad F_{3}=-F_2.
\]
Then, also $\lambda=-mc^2$ is an eigenvalue of $\cD$ and the proof is completed.
\end{proof}


\subsection{Graphs with embedded eigenvalues}
 
As we mentioned before, it is also possible, properly tuning the topology and the metric of the graph, to give rise to eigenvalues embedded in the essential spectrum.
 
 Here we limit ourselves to show a simple case, which nevertheless displays all the most important features of the phenomenon: the tadpole graph (see, e.g., Figure \ref{fig-tadpole}).
 
 \begin{figure}[ht]
\centering
\includegraphics[width=.6\columnwidth]{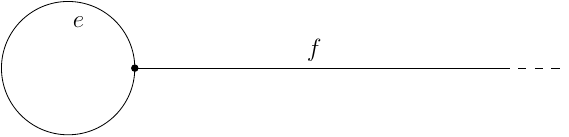}
\caption{Tadpole graph.}
\label{fig-tadpole}
\end{figure}

Arguing as before, first one identifies the circle $e$ of the graph in Figure \ref{fig-tadpole} with the compact interval $I=[0,\ell]$, with $x=0$ and $x=\ell$ representing the same vertex $\vv$.

Then, assume $|\lambda|>mc^2$ in \eqref{eigenequation1} and  \eqref{eigenequation2} and set $\psi_{f}\equiv 0$. In other words, we search for eigenvalues $\lambda\in(-\infty,-mc^2)\cup(mc^2,+\infty) $ whose eigenfunctions are supported on the circle. Namely, we look for spinors $\psi_e\in H^1(I,\C^2)$, such that
\begin{gather*}
-\imath c\frac{d\psi_e^{2}}{dx}=(\lambda-mc^{2})\psi_e^{1}, \qquad\text{in}\quad(0,\ell),\\[.2cm]
-\imath c\frac{d\psi_e^{1}}{dx}=(\lambda+mc^{2})\psi_e^{2}, \qquad\text{in}\quad(0,\ell),
\end{gather*}
and 
\[
\psi_e^1(0)=\psi_e^1(\ell)\qquad , \qquad\psi_e^2(0)=-\psi_e^2(\ell)\,.
\]
This is clearly equivalent to finding $\psi_e^1\in H^2(I)$ such that
\[
 \frac{d^2\psi_e^{1}}{dx^2}=\frac{m^2c^4-\lambda^2}{c^2}\psi_e^1,\qquad\text{in}\quad(0,\ell),
\]
with
\[
\psi_e^1(0)=\psi_e^1(\ell)\qquad\text{and}\qquad\tfrac{d\psi_e^1}{dx}(0)=-\tfrac{d\psi_e^1}{dx}(\ell).
\]
Now, it is easy to check that there are infinitely many values of $\lambda\in(-\infty,-mc^2)\cup(mc^2,+\infty)$ for which the problem admits a solution, i.e.
\[
 \lambda_k:=\mathrm{sgn}(k)\,\sqrt{m^2c^4+\frac{4\pi^2c^2}{\ell^2}k^2},\qquad k\in\Z\setminus\{0\},
\]
and easy computations yield that solutions are of the form
\[
 \begin{array}{l}
  \displaystyle \psi_e^1(x)=A\sin\left(k\pi\left(\frac{2x}{\ell}+1\right)\right)\\[.5cm]
  \displaystyle \psi_e^2(x)=\frac{-\imath2\pi kcA}{\ell(\lambda_k+mc^2)}\cos\left(k\pi\left(\frac{2x}{\ell}+1\right)\right)
 \end{array} 
 \qquad k\in\Z\setminus\{0\}
\]
with $A\in\R$. Hence, we found a sequence of eigenvalues $(\lambda_k)_{k\in\Z\setminus\{0\}}$ embedded in the essential spectrum, unbounded both from below and from above.


\section{A model case: the triple junction}\label{sec-model}

The aim of the present section is to clarify the main ideas explained before by means of an example. 

Consider a 3-star graph with one bounded edge and two half-lines, as depicted in Figure \ref{fig-grafoesempio}. In this case the finite edge is identified with the interval $I=[0,\ell_{e_{3}}]$ and $0$ corresponds to the common vertex of the segment and the half-lines. Here trace operators can be defined as
\[
\Gamma_{0}\psi=\left(\begin{array}{c}\psi_{e_1}^{1}(0) \\[.2cm]\psi_{e_2}^{1}(0) \\[.2cm] \psi_{e_3}^{2}(\ell_{e_{3}}) \\[.2cm] \imath c\psi_{e_3}^{1}(\ell_{e_{3}})\end{array}\right),\qquad \Gamma_{1}\psi=\left(\begin{array}{c}\imath c\psi_{e_1}^{2}(0) \\[.2cm] \imath c \psi_{e_2}^{2}(0) \\[.2cm]\imath c \psi_{e_3}^{2}(0) \\[.2cm] \psi_{e_3}^{1}(\ell_{e_{3}})\end{array}\right),
\] 
so that, again, Kirchoff-type conditions \eqref{eq-kirchtype1}-\eqref{eq-kirchtype2} can be written as 
\[
A\Gamma_{0}\psi=B\Gamma_{1}\psi, 
\]
with $AB^{*}=BA^{*}$ given by
\[
A=\frac{2}{3}\left(\begin{array}{cccc}-2 & 1 & 1 & 0 \\[.2cm] 1 & -2 & 1 & 0 \\[.2cm] 1 & 1 & -2 & 0 \\[.2cm]0 & 0 & 0 & a\end{array}\right),\qquad B=-\imath\frac{2}{3}\left(\begin{array}{cccc}1 & 1 & 1 & 0 \\[.2cm] 1 & 1 & 1 & 0 \\[.2cm] 1 & 1 & 1 & 0 \\[.2cm]0 & 0 & 0 & b\end{array}\right)
\]
(where, properly choosing $a,b\in\mathbb{C}$, one can fix the value of the spinor on the non-connected vertex).

\begin{figure}[h]
 \centering
 \begin{tikzpicture}[xscale= 0.4,yscale=0.4]
 \node at (0,0) [nodo] (00) {};
 \node at (4,0) [nodo] (4) {};
 \node at (-4,4) [infinito] (-44) {};
 \node at (-5,5) [infinito] (-4545) {};
 \node at (-4,-4) [infinito] (-4-4) {};
 \node at (-5,-5) [infinito] (-45-45) {};
 \node at (-3,2.2) [infinito] () {$e_1$};
 \node at (-3,-2) [infinito] () {$e_2$};
 \node at (2,.4) [infinito] () {$e_3$};

 \draw [-] (00) -- (4);
 \draw [-] (00) -- (-44);
 \draw [dashed] (-44) -- (-4545);
 \draw [-] (00) -- (-4-4);
 \draw [dashed] (-4-4) -- (-45-45);
 \end{tikzpicture}
 \caption{A 3-star graph with a finite edge.}
 \label{fig-grafoesempio}
\end{figure}
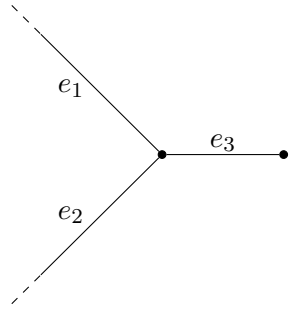

\begin{remark}
 Looking at the example above, and recalling that conditions \eqref{eq-kirchtype1}-\eqref{eq-kirchtype2} are defined independently on each vertex, one can easily see how to iterate the above construction for a more general graph structure, thus obtaining matrices $A,B$ with a block structure, each block corresponding to a vertex. 
\end{remark}

In order to investigate the spectral properties of $\cD$ on the graph depicted by Figure \ref{fig-grafoesempio}, we have to use Weyl's Theorem and compute the singularities of the resolvent (see \eqref{krein}). Hence, it is necessary to compute the gamma-field $\gamma(z)$ and the Weyl function $M(z)$.

The structure of $\gamma(z)$ and $M(z)$ can be recovered using a block composition as in \cite{CMP-JDE, GT-p}. In particular, the eigenvalues of the operator $\cD$ are given by the zeroes of the determinant of $(BM(z)-A)$, so that the computation of $(BM(z)-A)^{-1}B$, is needed.

\begin{remark}
In order to simplify some notations in the following, we choose $m=1/2$ in the definition of the operator \eqref{eq-dirac_formal}, so that the thresholds of the spectrum become $\lambda=\pm c/2$.
\end{remark}

Let us define
\[
k(z):=\frac{1}{c}\sqrt{z^2-(c^2/ 2)^2},\qquad z\in\C\,,
\]
and
\[
k_1(z):=\frac{ck(z)}{z+c^2/2}=\sqrt{\frac{z-c^2/2}{z+c^2/2}}\,,\qquad z\in\C\,.
\]
where  the branch of the multifunction $\sqrt\cdot$ is selected such that $k(x)> 0$ for $x > c^{2}/2$. It this way $k(\cdot)$ is holomorphic in $\mathbb{C}$ with two cuts along the half-lines $(-\infty,-c^{2}/2]$ and $[c^{2}/2,\infty)$.

Then, the Weyl function reads
\[
M(z)=\left(
\begin{array}{cccc}
 i c k_1(z ) & 0 & 0 & 0 \\
 0 & i c k_1(z) & 0 & 0 \\
 0 & 0 &  \frac{c k_1(z ) \sin (\ell_{e_{3}} k(z ))}{\cos (\ell_{e_{3}} k(z ))} & \frac{1}{\cos (\ell_{e_{3}} k(z ))} \\
 0 & 0 & \frac{1}{\cos (\ell_{e_{3}} k(z ))} & \frac{\sin (\ell_{e_{3}} k(z ))}{c k_1(z ){\cos (\ell_{e_{3}} k(z ))}} \\
\end{array}
\right),
\]
Assume also, for the sake of simplicity, that $a=0$, $b=1$ and fix $\ell_{e_{3}}=c=1$. After some calculations, one sees that the zeroes of the determinant of $B M(z)-A$ are given by the zeroes of the following function:

\begin{equation}
\label{det}
f(z)=-\frac{8}{9}  i \left(\sin \left(\frac{1}{2} \sqrt{4 z^2-1}\right)+\sin \left(\frac{3}{2} \sqrt{4 z^2-1}\right)+2 i\right) \sec ^4\left(\frac{1}{2} \sqrt{4 z^2-1}\right).
\end{equation}

\begin{figure}[ht]
\centering
\includegraphics[width=.6\columnwidth]{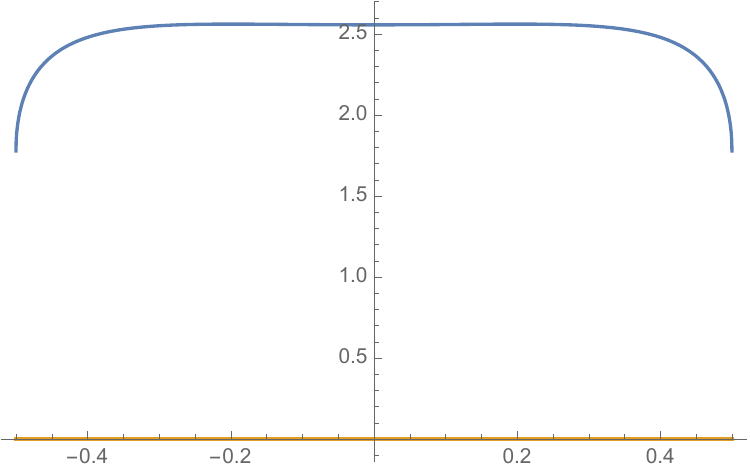}
\caption{The real and imaginary part of $f(z)$ for $z\in[-1/2,1/2]$, in blue and yellow, respectively. }
\end{figure}

Let us consider, initially, $z\in(-1/2, 1/2)$. For such values $\sqrt{4z^2-1}\in \imath\R$, and then (\ref{det}) rewrites
\[
f(z)=\frac{8}{9}   \left(\sinh \left(\frac{1}{2} \sqrt{1-4 z^2}\right)+\sinh \left(\frac{3}{2} \sqrt{1-4 z^2}\right)+2 \right) \textrm{sech} ^4\left(\frac{1}{2} \sqrt{1-4 z^2}\right). 
\]
which is a positive real function. For $z\notin(-1/2,1/2)$, on the contrary, $\sqrt{4 z^2-1}\in\R$ and so 
\[
\Re(f(z))=\frac{16}{9}\sec ^4\left(\frac{1}{2} \sqrt{4 z^2-1}\right)
\]
and 
\[
\Im(f(z))=-\frac{8}{9}\sin \left(2\sqrt{4 z^2-1}\right)\cos \left( \sqrt{4 z^2-1}\right)\,, 
\]
so that the real part of \eqref{det} cannot vanishes, while the imaginary part is periodic and with alternate sign. We can thus conclude that there are eigenvalues in the spectrum. This should be compared with the example presented in Section \ref{sec-thresholds}, where threshold eigenvalues do exist for a small change of the graph structure. 

Moreover, we remark that the above analysis does not exclude the presence of resonances. They may appear, when one imposes Kirchoff-type conditions, due to the eigenvalues of the operator given by the direct sum of the Dirac operators on the edge (without any boundary condition) that are embedded in the continuous part of the spectrum. However, although the explicit analysis of the spectrum for a Dirac graph can be complicated when the topological structure is complex, the spectral characterization and the analysis of resonances appear to be even more challenging and interesting problems. 


\section{The form-domain}\label{sec-quadraticform}

We conclude the paper with some remarks concerning the quadratic form associated with $\cD$. The reason for which this point deserves some particular attention can be easily explained.

In the standard cases ($\R^d$, with $d=1,2,3$) the quadratic form associated with the Dirac operator can be easily defined using the Fourier transform (see, e.g., \cite{ES-CMP}). Unfortunately, in the framework of non compact metric graphs this tool is not available. In addition, also classical duality arguments seems to be prevented as, generally speaking, $H^{-1/2}(\cG)$ is not the topological dual of $H^{1/2}(\cG)$, due to the presence of the compact core of the graph.

Therefore, one has to resort to the \emph{spectral theorem}, where the associated quadratic form $\mathcal{Q}_\cD$ and its domain $\dom(\mathcal{Q}_\cD)$ are defined as
\[
 \dom(\mathcal{Q}_\cD):=\bigg\{\psi\in L^2(\cG,\C^2):\int_{\sigma(\cD)}|\nu|\,d\mu^\cD_\psi(\nu)\bigg\},\qquad \mathcal{Q}_\cD(\psi):=\int_{\sigma(\cD)}\nu\,d\mu^\cD_\psi(\nu),
\]
with $\mu^\cD_\psi$ the spectral measure associated with $\cD$ and $\psi$. However, such a definition is very implicit and thus not useful in concrete cases (as, for instance, in \cite{BCT-p}).


On the other hand, a useful characterization of the form domain can be obtained arguing as follows, using \emph{real interpolation theory} (see, e.g., \cite{AF,A-p}). Define the space
\begin{equation}
\label{interpolateddomain}
Y:=\left[L^{2}(\cG,\C^{2}),\dom(\cD)\right]_{\frac{1}{2}},
\end{equation}
namely, the interpolated space of order $1/2$ between $L^2$ and the domain of the Dirac operator. First, note that
\[
Y\hookrightarrow H^{1/2}(\cG,\C^2):=\bigoplus_{e\in\mE} H^{1/2}(I_{e},\C^{2})=\left[L^{2}(\cG,\C^{2}),H^1(\cG,\C^2)\right]_{\frac{1}{2}},
\]
where $Y$ is endowed with the interpolation norm \eqref{eq:interpolation-norm} and $H^{1/2}(\cG,\C^2)$ with the natural norm. Therefore, by Sobolev embeddings,
 \[
  Y\hookrightarrow L^{p}(\cG,\C^{2}), \qquad\forall p\geq 2,
 \]
and, also, the embedding in $L^{p}(\cK,\C^{2})$ is compact, due to the compactness of $\cK$.

Let us prove that indeed
\begin{equation}
 \label{eq-formeq}
 \dom(\mathcal{Q}_\cD)=Y.
\end{equation}
This characterization turns out to be particularly useful, for instance, in the nonlinear case where one studies the existence of standing waves by variational methods \cite{BCT-p}. In order to prove \eqref{eq-formeq}, we exploit again the spectral theorem, but in a different form (see Theorem \ref{thm-cite} below). In particular, it states that, roughly speaking, every self-adjoint operator on a Hilbert space is isometric to a multiplication operator on a suitable $L^{2}$-space. In this sense self-adjoint operators can be ``diagonalized'' in an abstract way.

\begin{thm}{(\cite[thm. VIII.4]{RS-I})}
\label{thm-cite}
Let $H$ be a self-adjoint operator on a separable Hilbert space $\cH$ with domain $\dom(H)$. There exists a measure space $(M,\mu)$, with $\mu$ a finite measure, a unitary operator
\[
U:\cH\longrightarrow L^{2}\left(M,d\mu\right),
\]
and a real valued function $f$ on $M$, a.e. finite, such that
\begin{enumerate}
\item $\psi\in\dom(H)$ if and only if $f(\cdot)(U\psi)(\cdot)\in L^{2}(M,d\mu),$
\item if $\varphi\in U\left(\dom(H) \right)$, then $\left(UHU^{-1}\varphi \right)(m)=f(m)\varphi(m),\quad \forall m\in M$.
\end{enumerate}
\end{thm}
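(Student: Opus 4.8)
The plan is to prove the theorem by the classical \emph{cyclic subspace decomposition}, which reduces the general case to that of a single cyclic vector, where $H$ becomes multiplication by the identity function on the $L^{2}$-space of its spectral measure. The analytic input I take for granted is the spectral theorem in projection-valued measure (PVM) form: associated with the self-adjoint operator $H$ there is a unique PVM $E(\cdot)$ on the Borel subsets of $\R$ such that $H=\int_{\R}\nu\,dE(\nu)$ and $\dom(H)=\{\psi\in\cH:\int_{\R}\nu^{2}\,d\langle\psi,E(\nu)\psi\rangle<\infty\}$, together with the associated bounded Borel functional calculus $g\mapsto g(H)$.

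First I would settle the cyclic case. Call $\eta\in\cH$ \emph{cyclic} if the closed linear span of $\{E(\Omega)\eta:\Omega\subseteq\R\text{ Borel}\}$ equals $\cH$, and define the finite Borel measure $\mu_{\eta}(\Omega):=\langle\eta,E(\Omega)\eta\rangle$, which satisfies $\mu_{\eta}(\R)=\|\eta\|^{2}<\infty$. The functional calculus gives $\|g(H)\eta\|^{2}=\int_{\R}|g|^{2}\,d\mu_{\eta}$ for every bounded Borel $g$, so the map $g\mapsto g(H)\eta$ is an isometry from a dense subspace of $L^{2}(\R,d\mu_{\eta})$ into $\cH$; since $\eta$ is cyclic, it extends to a unitary $V:L^{2}(\R,d\mu_{\eta})\to\cH$. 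Setting $U:=V^{-1}$, a direct computation shows that $UHU^{-1}$ acts as multiplication by the function $\nu\mapsto\nu$ and that $\psi\in\dom(H)$ if and only if $\nu\,(U\psi)(\nu)\in L^{2}(\R,d\mu_{\eta})$. This is precisely the assertion with $M=\R$, $\mu=\mu_{\eta}$ and $f(\nu)=\nu$.

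Next I would treat the general case by assembling cyclic pieces. Using separability of $\cH$, an exhaustion argument decomposes $\cH$ as a countable orthogonal sum $\cH=\bigoplus_{n}\cH_{n}$ of mutually orthogonal $H$-reducing subspaces, where $\cH_{n}$ is the cyclic subspace generated by a vector $\eta_{n}$ chosen in the orthogonal complement of the previously selected ones; each restriction $H|_{\cH_{n}}$ is self-adjoint with cyclic vector $\eta_{n}$. Rescaling each generator I may assume $\|\eta_{n}\|^{2}=2^{-n}$, so that the measures $\mu_{n}:=\mu_{\eta_{n}}$ have total masses summing to $\sum_{n}2^{-n}<\infty$. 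Taking $M$ to be the disjoint union $\bigsqcup_{n}(\R\times\{n\})$ with the finite measure $\mu$ restricting to $\mu_{n}$ on the $n$-th copy, and $f(\nu,n):=\nu$, the unitaries from the cyclic case assemble into a single unitary $U:\cH\to\bigoplus_{n}L^{2}(\R,d\mu_{n})\cong L^{2}(M,d\mu)$, under which $H$ becomes multiplication by $f$. Since conditions (1) and (2) hold on each summand, they hold globally.

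The genuine obstacle is entirely contained in the input spectral theorem, namely the construction of the PVM $E$ and its functional calculus (typically obtained for the bounded operator $(H-\I)(H+\I)^{-1}$ via the continuous functional calculus and the Riesz representation theorem, then transported back to $H$); once this is granted, only the cyclic decomposition and the normalization making $\mu$ finite remain. The measurability requirements in the statement are immediate here, since $f(\nu,n)=\nu$ is Borel and finite at every point, so ``$f$ a.e. finite'' is automatic; the sole point needing care is that multiplication by $\nu$ on $L^{2}(\R,d\mu_{n})$ has exactly the domain $\{\,g:\int_{\R}\nu^{2}|g|^{2}\,d\mu_{n}<\infty\,\}$, which matches $\dom(H|_{\cH_{n}})$ under $U$ and yields (1).
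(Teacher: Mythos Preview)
The paper does not supply its own proof of this statement: Theorem~\ref{thm-cite} is quoted verbatim from \cite[Thm.~VIII.4]{RS-I} and used as a black box to characterize the form domain of $\cD$. There is therefore nothing in the paper to compare your argument against.

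That said, your sketch is correct and is in fact the standard route taken in \cite{RS-I}: one first treats the case of a cyclic vector, where the functional calculus $g\mapsto g(H)\eta$ furnishes the unitary onto $L^{2}(\R,d\mu_{\eta})$ and $H$ becomes multiplication by the identity function; then separability yields a countable orthogonal decomposition into cyclic subspaces, and the rescaling $\|\eta_{n}\|^{2}=2^{-n}$ guarantees that the disjoint-union measure is finite. The only point worth flagging is that you explicitly assume the PVM form of the spectral theorem as input, which makes the argument somewhat circular if one regards the multiplication-operator form as the more primitive statement; Reed--Simon avoid this by first proving the bounded case via the continuous functional calculus and Riesz representation, then passing to unbounded $H$ through the Cayley transform, exactly as you indicate in your final paragraph. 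Either way the mathematical content is the same.
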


The above theorem, in other words, states that $H$ is isometric to the multiplication operator by $f$ (still denoted by the same symbol) on the space $L^{2}(M,d\mu)$, whose domain is given by
\[
\dom(f):=\left\{\varphi\in L^{2}(M,d\mu) : f(\cdot)\varphi(\cdot)\in L^{2}(M,d\mu)\right\},
\]
endowed with the norm
\[
\Vert\varphi\Vert^{2}_{1}:=\int_{M}(1+ f(m)^{2})\vert\varphi(m)\vert^{2}d\mu(m)
\]
The form domain of $f$ has an obvious explicit definition, as $f$ is a multiplication operator, that is
\[
 \left\{\varphi\in L^{2}(M,d\mu) : \sqrt{|f(\cdot)|}\,\varphi(\cdot)\in L^{2}(M,d\mu)\right\}
\]
and we will prove in the sequel that it satisfies \eqref{eq-formeq} (we follow the presentation given in \cite{AF,A-p}).

Consider the Hilbert spaces $\cH_{0}:=L^{2}(M,d\mu)$ with the norm $\Vert x\Vert_{0}:=\Vert x\Vert_{L^{2}(d\mu)}$, and $\cH_{1}:=\dom(f)$, so that $\cH_{1}\subset\cH_{0}$. The squared norm $\Vert x\Vert^{2}_{1}$ is a densely defined quadratic form on $\cH_{0}$, represented by
\[
\Vert x\Vert^{2}_{1}=\langle (1+f^{2}(\cdot))x,x\rangle_{0},
\]
where $\langle\cdot,\cdot\rangle_{0}$ is the scalar product of $\cH_{0}$. Define, in addition, the following quadratic version of \emph{Peetre's K-functional}
\[
K(t,x):=\inf\left\{\Vert x_{0}\Vert^{2}_{0}+t\Vert x_{1}\Vert^{2}_{1} : x=x_{0}+x_{1},x_{0}\in \cH_{0},x_{1}\in \cH_{1}\right\}.
\]
By standard arguments (see e.g. \cite{A-p} or \cite[Ch. 7]{AF} and references therein) the intermediate spaces $\cH_{\theta}:=\left[\cH_{0},\cH_{1}\right]_{\theta}\subset\cH_{0}$, $0<\theta<1$, are given by the elements $x\in\cH_{0}$ such that the following quantity is finite:
\begin{equation}\label{eq:interpolation-norm}
 \Vert x\Vert^{2}_{\theta}=\int^{\infty}_{0}\left(t^{-\theta} K(t,x)\right)\frac{dt}{t}<\infty.
\end{equation}

\begin{remark}
{}{In the interpolation procedure we endow $\mathcal{H}_1$ with the $H^1$-norm, as a direct computation shows that it is equivalent to the graph-norm of $\cD$.}
\end{remark}

Now we can prove the following equivalence.
\begin{prop}
For every $\theta\in(0,1)$, one has
\be
\label{eq-thetanorm}
 \Vert x\Vert^{2}_{\theta}=\langle(1+f^{2}(\cdot))^{\theta}x,x \rangle_{0},\qquad\mbox{for $x\in\cH_{\theta}$.}
\ee
\end{prop}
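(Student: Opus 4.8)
The plan is to compute the K-functional explicitly, exploiting the fact that $f$ acts as a multiplication operator, and then to perform the resulting integral in $t$. Since the whole problem decouples pointwise in $m\in M$, the optimization defining $K(t,x)$ can be carried out at each point of $M$ separately. Concretely, writing $x=x_0+x_1$ and using that both norms are weighted $L^2$-norms with weights $1$ and $1+f^2$ respectively, I would first observe that
\[
 \Vert x_0\Vert_0^2 + t\Vert x_1\Vert_1^2 = \int_M \Big(|x_0(m)|^2 + t\,(1+f(m)^2)\,|x_1(m)|^2\Big)\,d\mu(m),
\]
subject to the pointwise constraint $x_0(m)+x_1(m)=x(m)$. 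Minimizing the integrand pointwise is an elementary one-variable (really two-real-variable, but effectively scalar) quadratic minimization: for fixed $s:=x(m)$ one minimizes $|s-w|^2 + t(1+f^2)|w|^2$ over $w\in\C$, which gives the minimizer $w = s/\big(1+t(1+f^2)\big)$ and minimal value $\dfrac{t(1+f^2)}{1+t(1+f^2)}\,|s|^2$.

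Substituting back, I would obtain the clean expression
\[
 K(t,x)=\int_M \frac{t\,(1+f(m)^2)}{1+t\,(1+f(m)^2)}\,|x(m)|^2\,d\mu(m).
\]
The next step is to insert this into the definition of $\Vert x\Vert_\theta^2$ and exchange the order of integration (justified by Tonelli, since the integrand is nonnegative), reducing everything to the scalar integral
\[
 \int_0^\infty t^{-\theta}\,\frac{t\,a}{1+t\,a}\,\frac{dt}{t} = a^{\theta}\int_0^\infty \frac{s^{-\theta}}{1+s}\,ds,
\]
where I have set $a:=1+f(m)^2$ and substituted $s=ta$. The remaining integral is the classical Beta-function integral $\int_0^\infty s^{-\theta}(1+s)^{-1}\,ds = \pi/\sin(\pi\theta)$, which is a finite constant $C_\theta$ independent of $m$. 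Pulling this constant out yields $\Vert x\Vert_\theta^2 = C_\theta\int_M (1+f(m)^2)^{\theta}|x(m)|^2\,d\mu(m)$, which is exactly \eqref{eq-thetanorm} up to the multiplicative constant $C_\theta$ (the constant is harmless, since interpolation norms are defined only up to equivalence, and one may absorb it or normalize the K-functional accordingly).

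I expect the main obstacle to be essentially bookkeeping rather than conceptual: making the pointwise minimization rigorous when $x_0,x_1$ range over genuine $L^2$-functions (as opposed to pointwise values) requires a measurable-selection or direct-verification argument showing that the pointwise optimizer $w(m)=x(m)/(1+t(1+f^2(m)))$ indeed lies in $\cH_1=\dom(f)$ and that no better global decomposition exists; this follows because the candidate decomposition is admissible and attains the pointwise infimum $\mu$-a.e., so it attains the global infimum. A secondary technical point is the appearance of the constant $C_\theta=\pi/\sin(\pi\theta)$: strictly speaking \eqref{eq-thetanorm} as written omits this factor, so I would either note that the two norms are equal up to the explicit constant $C_\theta$ (hence define equivalent, and in fact isometric after rescaling, intermediate spaces) or remark that the normalization of the K-functional integral can be chosen to make $C_\theta=1$. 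With this identification in hand, applying \eqref{eq-thetanorm} at $\theta=1/2$ and transporting it back through the unitary $U$ of Theorem \ref{thm-cite} identifies $\dom(\mathcal{Q}_\cD)$ with $Y$, establishing \eqref{eq-formeq}.
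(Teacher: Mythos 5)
Your proof is correct and reaches the same two milestones as the paper's, but by genuinely different means at each step. For the K-functional, the paper argues abstractly: setting $A:=1+f^{2}(\cdot)$, a convexity argument gives a unique optimal decomposition $x=x_{0,t}+x_{1,t}$, the Euler--Lagrange condition $A^{-1/2}x_{0,t}=tA^{1/2}x_{1,t}$ is solved via the functional calculus to get $x_{0,t}=\frac{tA}{1+tA}x$ and $x_{1,t}=\frac{1}{1+tA}x$, whence $K(t,x)=\langle\frac{tA}{1+tA}x,x\rangle_{0}$, i.e.\ exactly your formula (this is \eqref{eq-KA} in the paper). Your pointwise minimization is more elementary and essentially self-justifying: your explicit minimizer $w=x/(1+tA)$ is measurable, satisfies $|w|\leq|x|$ and $a|w|^{2}=\frac{a}{(1+ta)^{2}}|x|^{2}\leq\frac{1}{4t}|x|^{2}$, so it lies in $\cH_{1}=\dom(f)$, and since it attains the pointwise infimum a.e.\ no measurable-selection machinery is needed; the paper's variational argument, on the other hand, works for an arbitrary positive self-adjoint $A$ without passing to the multiplication-operator representation. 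The more substantive divergence is in the $t$-integral. The paper evaluates $\int_{0}^{\infty}t^{-\theta}(1+ta)^{-1}\,dt$ by an integration-by-parts/ODE trick, obtaining $f'(a)=\frac{\theta-1}{a}f(a)$ and then declaring $f(a)=a^{\theta-1}$; but the ODE only determines $f$ up to a multiplicative constant, and the remark about ``setting the integration constant equal to zero'' does not fix it. Your Beta-function computation gives the true value, $\int_{0}^{\infty}t^{-\theta}(1+ta)^{-1}\,dt=a^{\theta-1}\,\pi/\sin(\pi\theta)$, so you are right to flag that \eqref{eq-thetanorm} as stated holds only up to the factor $C_{\theta}=\pi/\sin(\pi\theta)$ (equal to $\pi$ even at $\theta=1/2$), and becomes an exact equality only after normalizing the K-method norm by $\sin(\pi\theta)/\pi$, which is the convention of the interpolation reference the authors cite. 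This constant is harmless for the intended application: the spaces coincide with equivalent norms, so the identification of $\cH_{1/2}$ with the form domain of $f$, and hence \eqref{eq-formeq} after transporting through the unitary $U$, goes through unchanged. In short, your argument is sound and, on the constant, more precise than the paper's own proof.
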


\begin{proof}
Preliminarily, for ease of notation, set $A:=(1+f^2(\cdot))$. The operator $A$ is positive and densely defined on $\cH_{0}$ and also its (positive) square root $A^{1/2}$ has a domain $\cA$ dense in $\cH_{0}$. Now, let us divide the proof in two step.

\emph{Step 1. There holds}
\be\label{eq-KA} K(t,x)=\left\langle\frac{tA}{1+tA}x,x \right\rangle_{0}, \qquad t>0,\quad x\in\cH_{0}.
\ee
First, observe that the  bounded operator in \eqref{eq-KA} is defined via the functional calculus for $A$. Then, take $x\in\cA$ and fix $t>0$. By a standard convexity argument one gets the existence of a unique decomposition
\[
x=x_{0,t}+x_{1,t},
\]
such that
\be
\label{eq-dec1}
K(t,x)=\Vert x_{0,t}\Vert^2_0 +t\Vert x_{1,t}\Vert^2_1 
\ee
(note also that $x_{j,t}\in\cA $, $j=0,1$). Then, for all $y\in\cA$, using the minimality requirement in the definition of $K$, one gets
\[
\frac{d}{ds}\left(\Vert x_{0,t}+sy\Vert^2_0+t\Vert x_{1,t}-sy\Vert^2_1 \right)_{\vert s=0}=0,
\]
and then, recalling that $\Vert x\Vert^2_1=\Vert A^{1/2}x\Vert^2_0$, we have
\[
\langle A^{-1/2}x_{0,t}-tA^{1/2}x_{1,t},A^{1/2}y \rangle_0=0.
\]
Since the above inequality must be true for all $y$ in the dense subset $\cA\in\cH_{0}$ we conclude that
$$A^{-1/2}x_{0,t}=tA^{1/2}x_{1,t}, $$
so that we obtain
\be\label{eq-dec2}
x_{0,t}=\frac{tA}{1+tA}x,\qquad x_{1,t}=\frac{1}{1+tA}x 
\ee
Combining \eqref{eq-dec1} and \eqref{eq-dec2} we get the claim. 
 
\emph{Step 2. Proof of \eqref{eq-thetanorm}.}
By Step 1 and exploiting the functional calculus for $A$, we get
\begin{multline}
\label{eq-rewrite}\Vert x\Vert^2_{\theta}=\int^{\infty}_{0}t^{-\theta}K(t,x)\frac{dt}{t}=\int^{\infty}_{0}t^{-\theta}\left\langle\frac{A}{1+tA}x,x\right\rangle_{0}dt\\[.2cm]
=\left\langle A\left(\int^{\infty}_{0}\frac{dt}{t^{\theta}(1+tA)}\right)x,x\right\rangle_{0}.
\end{multline}
Consider, then, the differentiable function
\[
f(a):=\int^{\infty}_{0}\frac{dt}{t^{\theta}(1+ta)},\qquad a>0.
\]
Integrating by parts, one easily gets
\[
\int^{\infty}_{0}\frac{dt}{t^{\theta}(1+ta)}=\frac{a}{1-\theta}\int^{\infty}_{0}\frac{tdt}{t^{\theta}(1+ta)^2}=-\frac{a}{1-\theta}f'(a).
\]
Then $f$ fulfills
\[
f'(a)=\frac{(\theta-1)}{a}f(a),\qquad a>0,
\]
and integrating 
\[
f(a)=a^{\theta-1}.
\]
Note that we have set the integration constant equal to zero in order to get the correct formula as $\theta\rightarrow 1^-$. Combining the above observations, one sees that \eqref{eq-rewrite} reads
$$\Vert x\Vert^{2}_{\theta}=\langle A^{\theta}x,x\rangle_{0}, $$ thus proving the claim.
\end{proof}

\begin{remark}
 Observe that, in the proof above, the bounded operators 
 $$
 \frac{tA}{1+tA},\qquad \frac{1}{1+tA}\,,\qquad t\geq 0\,,
 $$
 are defined using the functional calculus for the self-adjoint operator $A$.
 \end{remark}

Finally, in view of the previous proposition, if one sets for $\theta=\frac{1}{2}$, then one recovers the form domain of the operator $f$ and, hence, setting $H=\cD$ and $\cH=L^{2}(\cG,\C^{2})$, one has that \eqref{interpolateddomain} is exactly the form domain of $\cD$, with $Y=U^{-1}\cH_{\frac{1}{2}}$. Consequently, \eqref{eq-formeq} is satisfied and, summing up, we have shown the following

\begin{thm}
The form domain of $\cD$ (defined by Definition \ref{defi-dirac}) satisfies
\[
\dom(\mathcal{Q}_\cD)=\left[L^{2}(\cG,\C^{2}),\dom(\cD)\right]_{\frac{1}{2}},
\]
namely, is equal to the interpolated space of order $1/2$ between $L^2$ and the operator domain.
\end{thm}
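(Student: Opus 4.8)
The plan is to transport the whole problem to the multiplication-operator model provided by Theorem \ref{thm-cite} and then read off both sides of the claimed identity as weighted $L^2$-conditions that coincide. Setting $H=\cD$ and $\cH=L^2(\cG,\C^2)$, I would let $U:\cH\to L^2(M,d\mu)$ be the unitary of Theorem \ref{thm-cite} and $f$ the associated real multiplier, so that $U\cD U^{-1}$ is multiplication by $f$. Under $U$ the spectral measure satisfies $d\mu^\cD_\psi(\nu)=|(U\psi)(m)|^2\,d\mu(m)$ with $\nu=f(m)$, hence the abstract definition of the form domain rewrites as
\[
 U\big(\dom(\mathcal{Q}_\cD)\big)=\Big\{\varphi\in L^2(M,d\mu):\int_M |f(m)|\,|\varphi(m)|^2\,d\mu(m)<\infty\Big\},
\]
which is precisely the form domain of the multiplication operator $f$, namely $\{\varphi:\sqrt{|f|}\,\varphi\in L^2(M,d\mu)\}$.

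Next I would check that $U$ intertwines the two interpolation scales. Since $U$ is an isometry of $\cH_0=L^2(\cG,\C^2)$ onto $L^2(M,d\mu)$ carrying $\dom(\cD)$, with its graph norm, onto $\cH_1=\dom(f)$, with the norm $\Vert\cdot\Vert_1$, and since the real interpolation functor is invariant under isometric isomorphisms, one obtains
\[
 U\,[L^2(\cG,\C^2),\dom(\cD)]_{\frac12}=[\cH_0,\cH_1]_{\frac12}=\cH_{\frac12}.
\]
Thus $U(Y)=\cH_{1/2}$, and it remains only to identify $\cH_{1/2}$ with the form domain of $f$.

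This identification is exactly what the preceding Proposition delivers: taking $\theta=\tfrac12$ gives
\[
 \Vert x\Vert_{1/2}^2=\big\langle (1+f^2(\cdot))^{1/2}x,x\big\rangle_0=\int_M (1+f(m)^2)^{1/2}\,|x(m)|^2\,d\mu(m),
\]
so $x\in\cH_{1/2}$ if and only if this integral is finite. Since the weights $(1+f^2)^{1/2}$ and $1+|f|$ are comparable, each dominating the other up to a multiplicative constant, and membership of the ambient $L^2$-space is automatic, finiteness of the above integral is equivalent to $\int_M |f|\,|x|^2\,d\mu<\infty$, i.e. to $\sqrt{|f|}\,x\in L^2(M,d\mu)$. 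Hence $\cH_{1/2}$ coincides with the form domain of $f$.

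Combining the three identifications, $U(Y)=\cH_{1/2}=U(\dom(\mathcal{Q}_\cD))$, and applying $U^{-1}$ yields $Y=\dom(\mathcal{Q}_\cD)$, which is \eqref{eq-formeq} and hence the theorem. The only genuinely delicate point is the matching of weights in the last step: one must pass from the graph-norm weight $1+f^2$, which enters the interpolation norm through $\Vert\cdot\Vert_1$, to the form weight $|f|$, and this rests on the elementary but essential comparability $(1+f^2)^{1/2}\asymp 1+|f|$ together with the fact that the additive constant is absorbed by the ambient $L^2$-membership.
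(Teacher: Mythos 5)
Your proposal is correct and follows essentially the same route as the paper: transport to the multiplication-operator model of Theorem \ref{thm-cite}, invoke the interpolation Proposition with $\theta=\frac{1}{2}$ to get $\Vert x\Vert^{2}_{1/2}=\langle(1+f^{2})^{1/2}x,x\rangle_{0}$, and identify $\cH_{1/2}$ with the form domain of $f$. Your only additions are to make explicit two steps the paper leaves implicit --- the invariance of the interpolation couple under the unitary $U$ and the comparability $(1+f^{2})^{1/2}\asymp 1+|f|$ --- both of which are accurate and welcome clarifications rather than deviations.
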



\end{document}